\numberwithin{equation}{section}
\newcommand{\nc}{\newcommand}
\newcommand{\mc}{\mathcal}
\nc{\on}{\operatorname}
\nc{\h}{\mathfrak{h}}
\nc{\g}{\mathfrak{g}}
\nc{\n}{\mathfrak{n}}
\nc{\ch}{\on{CH}}
\nc{\wt}{\widetilde}
\renewcommand{\P}{\mc{P}}
\nc{\F}{\mc{F}}
\nc{\C}{\mc{C}}
\nc{\M}{\on{M}}
\nc{\T}{\mc{T}}
\renewcommand{\H}{\on{H}}
\nc{\G}{\mc{G}}
\renewcommand{\k}{k}
\theoremstyle{definition}
\newtheorem{theorem}{Theorem}
\newtheorem{definition}{Definition}
\newtheorem{remark}{Remark}
\theoremstyle{definition}
\newtheorem{example}{Example}
\begin{document}

\title{Pre-Lie algebras and Incidence Categories of Colored Rooted Trees}
\author{Matt Szczesny} \thanks{The author is supported by an NSA grant}
\address{Department of Mathematics  and Statistics, 
         Boston University, Boston MA, USA}
\email{szczesny@math.bu.edu}

\begin{abstract}
The incidence category $\C_{\F}$ of a family $\F$ of colored posets closed under disjoint unions and the operation of taking convex sub-posets was introduced by the author in \cite{Sz}, where the Ringel-Hall algebra $\H_{\F}$ of $\C_{\F}$ was also defined. We show that if the Hasse diagrams underlying $\F$ are rooted trees, then the subspace $\n_{\F}$ of primitive elements of $\H_{\F}$ carries a pre-Lie structure, defined over $\mathbb{Z}$, and with positive structure constants. We give several examples of $\n_{\F}$, including the nilpotent subalgebras of $\mathfrak{sl}_n$, $L \mathfrak{gl}_n$, and several others. 
\end{abstract}

\maketitle

\section{Introduction}

 A \emph{left pre-Lie algebra} is a  $\k$--vector space $A$ endowed with a binary bilinear operation $\rhd$ satisfying the identity
\begin{equation} \label{pL}
(a \rhd b) \rhd c - a \rhd ( b \rhd c) = (b \rhd a) \rhd c - b \rhd (a \rhd c)
\end{equation}
It follows easily from \ref{pL} that anti-symmetrizing $\rhd$ yields a Lie bracket $$[a,b] = a \rhd b - b \rhd a$$ on $A$. However, not every Lie algebra arises from a pre-Lie algebra. 
Pre-Lie algebras first appeared in the works of E.B. Vinberg \cite{V} and M. Gerstenhaber \cite{G}, and have since found applications in several areas. One prominent example is 
perturbative quantum field theory \cite{K}, where insertion of Feynman graphs into each other equips them with a pre-Lie structure which controls the combinatorics of the renormalization procedure.  

In this paper, we show that pre-Lie algebras arise naturally from \emph{incidence categories} introduced by that author in \cite{Sz}. An incidence category is built from a collection $\F$ of colored posets, which is closed under the operations of disjoint union and convex subposet - we will denote it by $\C_{\F}$. The objects of $\C_{\F}$ are the posets in $\F$, and for $P_1, P_2 \in \F$
\[
\on{Hom}(P_1, P_2) := \{ (I_1, I_2, f) \vert I_j \textrm{ is an order ideal in } P_j, f: P_1 \backslash I_1 \rightarrow I_2 \textrm{ an isomorphism } \}
\]
Here, the poset $I_1$ should be viewed as the kernel of the morphism, and $I_2$ as the image.  All morphisms in $\C_{\F}$ have kernels and cokernels, and so the notion of exact sequence makes sense. In \cite{Sz}, the Ringel-Hall algebra $\H_{\C_{\F}}$ of $\C_{\F}$ was defined. $\H_{\C_{\F}}$ is the 
$\mathbb{Q}$--vector space of finitely supported functions on isomorphism classes of $\C_{\F}$:
\[
\H_{\C_{\F}} := \{ f: \on{Iso}(\C_{\F}) \rightarrow \mathbb{Q} | |supp(f)| < \infty  \}
\]
with product given by convolution:
\begin{equation} \label{prod}
f \star g (M) = \sum_{A \subset M} f(A) g(M/A). 
\end{equation}
$\H_{\C_{\F}}$ possesses a co-commutative co-product given by
\begin{equation} \label{coppp}
\Delta(f)(M,N)=f(M \oplus N) 
\end{equation}
(where $M \oplus N$ denotes the disjoint union of $M$ and $N$)
as well as an antipode, making it a Hopf algebra. $\H_{\C_{\F}}$ is graded, connected, and co-commutative, and so by the Milnor-Moore theorem isomorphic to $U(\n_{\F})$, where $\n_{\F}$ is the Lie algebra of its primitive elements. It follows from \ref{coppp} that 
\[
\n_{\F} = \on{span} \{ \delta_{P} \vert P \in \F, P \textrm{ connected } \}
\]

We show that if $\F$ consists of posets whose Hasse diagrams are rooted trees, then $\n_{\F}$ carries a pre-Lie structure $\rhd$, with 
\[
\delta_{P_1} \rhd \delta_{P_2} := \delta_{P_1} \star \delta_{P_2} - \delta_{P_1 \oplus P_2}.
\]
A more concrete description of $\rhd$ is the following: $ \delta_{P_1} \rhd \delta_{P_2} $ is a sum of delta-functions supported on connected posets $P \in \F$ whose Hasse diagram is obtained by grafting the root of $P_1$ onto a vertex of $P_2$. It follows from the definition of $\rhd$ that the structure constants are non-negative integers. 

The paper is organized as follows. Section \ref{pL_section} recalls the definition of pre-Lie algebra and introduces the universal example, namely the pre-Lie algebra of colored rooted trees. In section \ref{icat_section} we recall the construction of the incidence category $\C_{\F}$ as well as its main properties. The Ringel-Hall algebra of $\C_{\F}$ is introduced in section \ref{Hall_section}. In section \ref{main_section} we define the pre-Lie structure $\rhd$ on $\n_{\F}$ and verify that it satisfies the identity \ref{pL}. Finally, section \ref{example_section} is devoted to examples - among these are pre-Lie structures on nilpotent Lie subalgebras of $\mathfrak{sl}_n$ and $L{\mathfrak{gl}}_n$.

\bigskip

\noindent {\bf Acknowledgements: } The author is very grateful to Pavel Etingof for valuable discussions and suggestions. 

\section{Pre-Lie algebras} \label{pL_section}

In this section, we recall the definition and some examples of (left) pre-Lie algebras. Let $\k$ be a field. 

\begin{definition}
A \emph{left pre-Lie algebra} is a  $\k$--vector space $A$ endowed with a binary bilinear operation $\rhd$ satisfying the left pre-Lie identity
\begin{equation} \label{preLieIdentity}
(a \rhd b) \rhd c - a \rhd ( b \rhd c) = (b \rhd a) \rhd c - b \rhd (a \rhd c)
\end{equation}
for $a,b,c \in A$.
\end{definition}
One checks easily that antisymmetrizing the operation $\rhd$ 
\[
[a,b] = a \rhd b - b \rhd a
\]
gives $A$ the structure of a Lie algebra. 

\begin{example}
Any associative $k$--algebra $A$ is a pre-Lie algebra with the pre-Lie structure given by
\[
a \rhd b := ab,
\]
where the right hand side refers to the associative multiplication in $A$. 
\end{example}

\begin{example} \label{RT}
One of the most important examples of pre-Lie algebras is given by colored rooted trees. Recall that a \emph{tree} is a graph with no cycles. We denote by $E(t), V(t)$ the edge and vertex sets of $t$ respectively. Let $S$ be a finite set.  By a \emph{rooted tree colored by S} we mean a tree with a distinguished vertex $r(t) \in V(t)$ called the \emph{root}, and an assignment of an element of $S$ to each $v \in V(t)$. We adopt the convention that rooted trees are always drawn with the root on top. For example, if $S =\{a,b \}$, then the following are rooted trees colored by $S$:

\psset{levelsep=2ex, treesep=0.6cm, treenodesize=1pt,nodesepB=-2pt}

\begin{center} \pstree{\Tc*{3pt}~[tnpos=r]{a}}{} \hspace{0.5cm} \pstree{\Tc*{3pt}~[tnpos=r]{b}}{} \hspace{0.5cm}  \pstree{\Tc*{3pt}~[tnpos=r]{a}}{\Tc*{3pt}~[tnpos=r]{b}} \hspace{0.5cm} \pstree{\Tc*{3pt}~[tnpos=r]{a}}{\Tc*{3pt}~[tnpos=r]{a}} \hspace{0.5cm} \pstree{\Tc*{3pt}~[tnpos=r]{b}}{\Tc*{3pt}~[tnpos=r]{a}} \hspace{0.5cm} 
\pstree{\Tc*{3pt}~[tnpos=r]{b}}{\Tc*{3pt}~[tnpos=r]{b}} \hspace{0.5cm} \pstree{\Tc*{3pt}~[tnpos=r]{b}}{\Tc*{3pt}~[tnpos=r]{b} \Tc*{3pt}~[tnpos=r]{a}}
\end{center}

Let $\mathbb{T}_S$ denote the set of rooted trees whose vertices are colored by $S$. Given $t \in \mathbb{T}_S$, and $e \in E(t)$, removing $e$ disconnects $t$ into two colored rooted trees: $R_e(t)$ containing $r(t)$ and $P_e(t)$, whose root is the end of $e$. Let $\T_S$ be the $\k$--vector space spanned by $\mathbb{T}_S$. We have 
\[
\T_S = \oplus^{\infty}_{n=0} \T_S [n]
\]
where $\T_S [n]$ is the subspace of $\T_S$ spanned by trees with $n$ vertices. For colored rooted trees $t_1, t_2 \in \mathbb{T}_S$, let 
\[
t_1 \rhd t_2 := \sum_{s \in \mathbb{T}_S} n(t_1, t_2, s) s
\]
where 
\[
n(t_1, t_2,s) = \# \{ e \in E(s) | P_e (s) = t_1, R_e(s) = t_2 \}.
\]
For example, we have
\[
 \pstree{\Tc*{3pt}~[tnpos=r]{b}}{} \rhd   \pstree{\Tc*{3pt}~[tnpos=r]{a}}{\Tc*{3pt}~[tnpos=r]{b}} =  2  \pstree{\Tc*{3pt}~[tnpos=r]{a}}{\Tc*{3pt}~[tnpos=r]{b} \Tc*{3pt}~[tnpos=r]{b}} + \pstree{\Tc*{3pt}~[tnpos=r]{a}}{\pstree{\Tc*{3pt}~[tnpos=r]{b}}{ \Tc*{3pt}~[tnpos=r]{b}}}
\]
It is well-known (see for instance \cite{CL}) that $\rhd$ defines a pre-Lie structure on $\T_S$. The following theorem is proven in \cite{CL}

\begin{theorem}
$\T_S$ is the free pre-Lie algebra on $|S|$ generators. 
\end{theorem}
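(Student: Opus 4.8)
To say that $\T_S$ is the free pre-Lie algebra on $|S|$ generators is to say that, writing $\iota(c)\in\mathbb{T}_S$ for the one-vertex tree colored by $c\in S$, the pair $(\T_S,\iota)$ has the following universal property: for every pre-Lie $\k$-algebra $(A,\rhd)$ and every map of sets $f\colon S\to A$, there is a unique homomorphism of pre-Lie algebras $F\colon\T_S\to A$ with $F(\iota(c))=f(c)$ for all $c\in S$. I would establish this in two halves --- uniqueness, by showing the trees $\iota(c)$ generate $\T_S$ as a pre-Lie algebra, and existence, by writing down $F$ via a recursion on the number of vertices. Throughout it helps to use the grafting operator $B_c^+$: for a color $c$ and colored rooted trees $t_1,\dots,t_k$, let $B_c^+(t_1,\dots,t_k)$ be the tree obtained by adjoining a new root, colored $c$, joined by an edge to $r(t_i)$ for each $i$; every tree with at least one vertex has an essentially unique such presentation.

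\emph{Generation, hence uniqueness.} I claim every colored rooted tree lies in the pre-Lie subalgebra generated by $\{\iota(c):c\in S\}$, proved by induction on the number of vertices with a secondary induction on the root-degree $k$ (the number of edges meeting the root). The one-vertex case is trivial. Write a tree as $t=B_c^+(t_1,\dots,t_k)$ with $k\ge 1$, group its branches into distinct isomorphism types $s_1,\dots,s_j$ with multiplicities $m_1,\dots,m_j$, and set $R:=B_c^+\!\bigl(s_1^{\,m_1-1},s_2^{\,m_2},\dots,s_j^{\,m_j}\bigr)$, a tree with strictly fewer vertices. Reading the grafting product off its definition, $s_1\rhd R$ equals $m_1\cdot t$ (the terms grafting the root of $s_1$ onto the root of $R$) plus a sum of trees with the same number of vertices as $t$ but root-degree exactly $k-1$ (grafting $s_1$ onto a non-root vertex of $R$). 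Hence $t=\tfrac1{m_1}(s_1\rhd R)-\tfrac1{m_1}(\text{root-degree }k-1\text{ terms})$: the first summand is in the subalgebra by the primary induction applied to $s_1$ and $R$, the rest by the secondary induction. (When $k=1$ this is just $B_c^+(t_1)=t_1\rhd\iota(c)$. The division by $m_1$ uses $\operatorname{char}\k=0$; the general case reduces to the operadic statement over $\mathbb Z$.) Since the $\iota(c)$ generate $\T_S$, any pre-Lie homomorphism extending $f$ is uniquely determined.

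\emph{Existence.} In any pre-Lie algebra $A$, define multibrace operations by $\langle\,;b\rangle:=b$ and
\[
\langle a_1,\dots,a_k;b\rangle:=a_k\rhd\langle a_1,\dots,a_{k-1};b\rangle-\sum_{i=1}^{k-1}\langle a_1,\dots,a_{i-1},\,a_k\rhd a_i,\,a_{i+1},\dots,a_{k-1};b\rangle .
\]
Using identity \ref{preLieIdentity}, I would show by induction on $k$ that $\langle a_1,\dots,a_k;b\rangle$ is symmetric in $a_1,\dots,a_k$ (the case $k=2$ is exactly \ref{preLieIdentity}); the recursion plus this symmetry then gives the insertion relation
\[
a_0\rhd\langle a_1,\dots,a_k;b\rangle=\langle a_0,a_1,\dots,a_k;b\rangle+\sum_{i=1}^k\langle a_1,\dots,a_0\rhd a_i,\dots,a_k;b\rangle ,
\]
the algebraic shadow of grafting one more tree somewhere above the root. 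I then define $F\colon\T_S\to A$ by induction on vertex number: $F(\iota(c)):=f(c)$ and $F\bigl(B_c^+(t_1,\dots,t_k)\bigr):=\langle F(t_1),\dots,F(t_k);f(c)\rangle$ --- well defined on colored rooted trees, whose branches form an unordered multiset, precisely because the brace is symmetric. Finally I would check $F(t_1\rhd t_2)=F(t_1)\rhd F(t_2)$ by induction on $|V(t_1)|+|V(t_2)|$, splitting the grafting of $t_1$ onto $t_2$ into the term where $t_1$ lands on the root of $t_2$ (matched by the definition of $F$) and the terms where it lands inside a branch of $t_2$ (matched, via the inductive hypothesis, by the insertion relation).

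\emph{Where the difficulty lies.} The generation step is a routine double induction; the work is concentrated in the existence step, and is purely pre-Lie algebraic: one must show that the single identity \ref{preLieIdentity} forces both the symmetry and the insertion relations for the multibraces, after which multiplicativity of $F$ is bookkeeping. In essence the point is that grafting of colored rooted trees satisfies no relations beyond those implied by \ref{preLieIdentity}. A more structural alternative replaces the existence argument by a dimension count: the surjection from the abstract free pre-Lie algebra on $S$ onto $\T_S$ produced by the generation step is an isomorphism because in arity $n$ the pre-Lie operad is free of rank $n^{n-1}$, the number of rooted trees on $n$ labeled vertices; there the technical heart becomes the computation of that Hilbert series, e.g.\ via Koszul duality with the permutative operad.
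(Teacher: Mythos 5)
The paper does not actually prove this statement: it quotes it from Chapoton--Livernet \cite{CL}, whose argument is operadic (they identify the pre-Lie operad with the operad of rooted trees). Your proposal is a correct, essentially self-contained alternative along standard lines: the generation step (double induction on vertex number and root degree, using that the coefficient of $t$ in $s_1\rhd R$ is exactly $m_1$ because any edge-cut of $t$ producing a piece isomorphic to $R$ must be a root edge leading to a branch of type $s_1$) is sound, and the existence step is the symmetric-brace construction of Guin--Oudom and Lada--Markl, where your recursion, the symmetry of $\langle a_1,\dots,a_k;b\rangle$, and the insertion relation do yield a well-defined multiplicative extension $F$ exactly as you describe. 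The one substantive deferred point is the symmetry of the higher braces: for $k=2$ it is literally identity \ref{preLieIdentity}, but for $k\geq 3$ it requires a genuine (if standard) joint induction with the insertion relation, so that is where your write-up would have to do real work; still, it is a known true lemma, not a gap in the strategy. What the two routes buy: your argument verifies the universal property directly and explicitly (at the cost of the division by $m_1$, harmless here since $\k=\mathbb{Q}$), while the cited operadic proof yields the stronger statement about the operad itself together with Koszulness. Two small cautions about your ``structural alternative'': the pre-Lie operad in arity $n$ is not \emph{free} of rank $n^{n-1}$ as an $S_n$-module (rooted trees have automorphisms); you mean its underlying space has dimension $n^{n-1}$; and obtaining that dimension independently of the tree model requires first proving Koszulness (say of the permutative operad, which is elementary) so as to avoid circularity with \cite{CL}, where Koszulness is derived using the rooted-tree basis.
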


\begin{remark}
In what follows, unless stated otherwise, $k=\mathbb{Q}$. 
\end{remark}

\end{example}

\section{Incidence categories} \label{icat_section}

\subsection{Recollections on posets} 

We begin by recalling some basic notions and terminology pertaining to posets ( partially ordered sets) following \cite{Sch, St}. 

\begin{enumerate}
\item An \emph{interval} is a poset having unique minimal and maximal elements. For $x, y$ in a poset $P$, we denote by $[x,y]$ the interval
\[
[x,y] := \{ z \in P : x \leq z \leq y \}
\]  
If $P$ is an interval, we will often denote by $0_P$ and $1_P$ the minimal and maximal elements. 
\item An \emph{order ideal} in a poset $P$ is a subset $L \subset P$ such that whenever $y \in L$ and $x \leq y$ in $P$, then $x \in L$.
\item  A sub-poset $Q$ of $P$ is \emph{convex} if, whenever $x \leq y$ in $Q$ and $z \in P$ satisfies $x \leq z \leq y$, then $z \in Q$. Equivalently, $Q$ is convex if $Q = L \backslash I$ for order ideals $I \subset L$ in $P$. 
\item Given two posets $P_1, P_2$, their disjoint union is naturally a poset, which we denote by $P_1 + P_2$. In $P_1 + P_2$, $x \leq y$ if both lie in either $P_1$ or $P_2$, and $x \leq y$ there.
\item A poset which is not the union of two non-empty posets is said to be \emph{connected}. 
\item The cartesian product $P_1 \times P_2$ is a poset where $(x,y) \leq (x',y')$ iff $x \leq x'$ and $y \leq y'$. 
\item A \emph{distributive lattice} is a poset $P$ equipped with two operations $\wedge$, $\vee$ that satisfy the following properties:
\begin{enumerate}
\item $\wedge, \vee$ are commutative and associative
\item $\wedge, \vee$ are idempotent - i.e. $x \wedge x = x$, $x \vee x = x$
\item $x \wedge (x \vee y) = x = x \vee ( x \wedge y)$
\item $x \wedge y = x \iff x \vee y = y \iff x \leq y$
\item $ x \vee (y \wedge z) = (x \vee y) \wedge (x \vee z)$
\item $x \wedge (y \vee z) = (x \wedge y) \vee (x \wedge z)$
\end{enumerate}

\item For a poset $P$, denote by $J_P$ the poset of order ideals of $P$,  ordered by inclusion. $J_P$ forms a distributive lattice with $I_1 \vee I_2 := I_1 \cup I_2$ and $I_1 \wedge I_2 := I_1 \cap I_2$ for $I_1, I_2 \in J_P$. If $P_1, P_2$ are posets, we have $J_{P_1 + P_2} = J_{P_1} \times J_{P_2}$, and if $I,L \in J_P$, and $I \subset L$, then $[I,L]$ is naturally isomorphic to the lattice of order ideals $J_{L \backslash I}$. 

\end{enumerate}

\bigskip

\begin{remark}
Suppose that the Hasse diagram of a poset $P$ is a rooted tree - that is, $P$ has a unique maximal element $r(P)$, and the Hasse diagram contains no cycles. It is  then easy to see that order ideals $I \subset P$ correspond to \emph{admissible cuts} of $P$, where the latter is a collection of edges $C \subset E(P)$, having the property that at most one edge of $C$ is encountered along any path from root to leaf. For instance, the dotted edges of the poset $T$ below yield an admissible cut:
\psset{levelsep=3ex, treesep=0.6cm, treenodesize=1pt,nodesepB=-2pt}
\[
\pstree{\Tc*{3pt}~[tnpos=r]{b}}{ \psset{linestyle=dashed} \Tc*{3pt}~[tnpos=r]{b} \psset{linestyle=solid} \pstree{\Tc*{3pt}~[tnpos=r]{a}}{ \psset{linestyle=dashed}  \Tc*{3pt}~[tnpos=r]{b}} }
\]
Each admissible cut $C \subset E(P)$ divides the tree into a rooted connected tree $R_C(P)$ containing $r(P)$, and a rooted forest (a disjoint union of rooted trees) $P_C (P)$. The notation is clearly an extension of that used in example \ref{RT}. In the last example, we have
\[
R_C (T) = \pstree{\Tc*{3pt}~[tnpos=r]{b}}{\Tc*{3pt}~[tnpos=r]{a}} \hspace{2cm} P_C(T) =  \pstree{\Tc*{3pt}~[tnpos=r]{b}}{} \hspace{0.3cm} \pstree{\Tc*{3pt}~[tnpos=r]{b}}{} 
\]
\end{remark}

\subsection{From posets to categories}

Let $\F$ be a family of colored posets which is closed under the formation of disjoint unions and the operation of taking convex subposets, and let 
\mbox{$\P(\F) = \{ J_P : P \in \F \}$} be the corresponding family of distributive lattices of order ideals. For each pair 
$P_1, P_2 \in \F$, let $\M(P_1, P_2)$ denote the set of colored poset isomorphisms $P_1 \rightarrow P_2$. It follows that $\M(P,P)$ forms a group, which we denote $\on{Aut}_{\M}(P)$. 

\subsubsection{The category $\C_{\F}$}
\vspace{0.5cm}

We proceed to define a category $\C_{\F}$, called the \emph{incidence category of $\F$} as follows. Let
$$\on{Ob}(\C_{\F} ) := \F = \{  P \in \F \}$$ and 
$$ \on{Hom}({P_1}, {P_2}) := \{ ( I_1, I_2, f) : I_i \in J_{P_i},  f \in \M(P_1 \backslash I_1, I_2) \} \; \; i=1,2$$
We need to define the composition of morphisms
\[
\on{Hom}({P_1}, {P_2}) \times \on{Hom}({P_2}, {P_3}) \rightarrow \on{Hom}({P_1}, {P_3})
\]
Suppose that $(I_1, I_2, f) \in \on{Hom}({P_1}, {P_2})$ and $(I'_2, I'_3, g) \in \on{Hom}({P_2}, {P_3})$. Their composition is the morphism $(K_1, K_3, h)$ defined as follows. 
\begin{itemize}
\item We have $I_2 \wedge I'_2 \subset I_2$, and since $f: P_1 \backslash I_1 \rightarrow I_2$ is an isomorphism, $f^{-1}(I_2 \wedge I'_2)$ is an order ideal of $P_1 \backslash I_1$. Since in $J_{P_1}$, $[I_1, P] \simeq J_{P_1 \backslash I_1}$, we have that $f^{-1}(I_2 \wedge I'_2)$ corresponds to an order ideal $K_1 \in J_{P_1}$ such that $I_1 \subset K_1$. 
\item We have $I'_2 \subset I_2 \vee I'_2$, and since $ [I'_2, P_2] \simeq J_{P_2 \backslash I'_2}$, $I_2 \vee I'_2$ corresponds to an order ideal $L_2 \in J_{P_2 \backslash I'_2}$. Since $g: P_2 \backslash I'_2 \rightarrow I'_3$ is an isomorphism, $g(L_2) \subset J_{I'_3}$, and since $J_{I'_3} \subset J_{P_3}$, $g(L_2)$ corresponds to an order ideal $K_3 \in J_{P_3}$ contained in $I'_3$. 
\item  The isomorphism $f: P_1 \backslash I_1 \rightarrow I_2$ restricts to an isomorphism $\bar{f}: P_1 \backslash K_1 \rightarrow I_2 \backslash I_2 \wedge I'_2 = I_2 \backslash I'_2 $, and the isomorphism $g: P_2 \backslash I'_2$ restricts to an isomorphism $\bar{g}: I_2 \vee I'_2 \backslash I'_2 = I_2 \backslash I'_2 \rightarrow  K_3$. Thus, $g \circ f: P_1 \backslash K_1 \rightarrow K_3$ is an isomorphism and $g \circ f \in \M(P_1 \backslash K_1, K_3)$ by the property $(4)$ above. 
 \end{itemize}

As shown in \cite{Sz}, the composition of morphisms is associative. 

\bigskip

\begin{remark}

\noindent 
\begin{itemize}
\item We refer to ${I_2}$ as the \emph{image} of the morphism $(I_1, I_2, f): {P_1} \rightarrow {P_2}$. 
\item We denote by $\on{Iso}(\C_{\F})$ the collection of isomorphism classes of objects in $\C_{\F}$, and by $[P]$ the isomorphism class of $P \in \C_{\F}$. 
\end{itemize}

\end{remark}

\subsection{Properties of the categories $\C_{\F}$}

We now enumerate some of the properties of the categories $\C_{\F}$. 
\bigskip
\begin{enumerate}
\item The empty poset $\emptyset$ is an initial, terminal, and therefore null object. We will sometimes denote it by ${\emptyset}$.   
\bigskip
\item We can equip $\C_{\F}$ with a symmetric monoidal structure by defining
\[
{P_1} \oplus {P_2}  := {P_1 + P_2}.
\]
\item The indecomposable objects of $\C_{\F}$ are the $P$ with $P$ a connected poset in $\F$.
\bigskip
\item The simple objects of $\C_{\F}$ are the ${P}$ where $P$ is a one-element poset. 
\bigskip
\item  \label{kernel} Every morphism 
\begin{equation} \label{morphism}
(I_1, I_2,f) : {P_1} \rightarrow {P_2}
\end{equation}
has a kernel
\[
(\emptyset,I_1, id):  {I_1} \rightarrow {P_1} 
\]
\bigskip
\item \label{cokernel} Similarly, every morphism \ref{morphism} possesses a cokernel
\[
(I_2, P_2 \backslash I_2, id) : {P_2} \rightarrow P_2 \backslash I_2
\]
\medskip
\noindent We will use the notation ${P_2}/{P_1}$ for $coker((I_1,I_2,f))$. 

\bigskip
\noindent {\bf Note:} Properties \ref{kernel} and \ref{cokernel} imply that the notion of exact sequence makes sense in $\C_{\F}$.
\bigskip
\item All monomorphisms are of the form
\[
(\emptyset,I, f) : Q \rightarrow {P}
\]
where $I \in J_P$, and $f: Q \rightarrow I \in \M(Q,I)$. Monomorphisms $Q \rightarrow P$ with a fixed image ${I}$ form a torsor over $\on{Aut}_{\M}(I)$. 
All epimorphisms are of the form
\[
(I,\emptyset, g) : P \rightarrow {Q}
\]
where $I \in J_P$ and $g: P \backslash I \rightarrow Q \in \M(P \backslash I, Q)$. Epimorphisms with fixed kernel $I$ form a torsor over $\on{Aut}_{\M}(P \backslash I)$ 
\bigskip
\item
Sequences of the form \label{propses}
\begin{equation} \label{ses}
{\emptyset} \overset{(\emptyset, \emptyset, id)}{\rightarrow} {I} \overset{(\emptyset,I, id)}{\longrightarrow} {P} \overset {(I,\emptyset,id)}{\longrightarrow} {P \backslash I} \overset{(P \backslash I, \emptyset, id)}{\rightarrow} \emptyset
\end{equation} 
with $I \in J_P$ are short exact, and all other short exact sequences with $P$ in the middle arise by composing with isomorphisms $I \rightarrow {I'}$ and ${P \backslash I} \rightarrow Q$ on the left and right. 
\bigskip
\item

 \label{quotients}  Given an object ${P}$ and a subobject ${I}, I \in J_P$,  the isomorphism $J_{P \backslash I} \simeq [I,P]$ translates into the statement that there is a bijection between subobjects of $P/I$ and order ideals $J \in J_P$ such that $I \subset J \subset P$. The bijection is compatible with quotients, in the sense that $(P/I)/(J/I) \simeq J/I$. 
 

\bigskip
\item Since the posets in $\F$ are finite, $\on{Hom}({P_1},{P_2})$ is a finite set. 
\bigskip
\item We may define Yoneda $\on{Ext}^{n}({P_1}, {P_2})$ as the equivalence class of $n$--step exact sequences with ${P_1}, {P_2}$ on the right and left respectively.  $\on{Ext}^{n} ({P_1}, {P_2})$ is a finite set. Concatenation of exact sequences makes $$\mathbb{E}xt^* := \cup_{A,B \in I(\C_{\F}), n} \on{Ext}^n (A,B)$$ into a monoid.  
\bigskip
\item We may define the Grothendieck group of $\C_{\F}$, $K_0(\C_{\F})$, as 
\[
K(\C_{\F}) = \bigoplus_{A \in \C_{\F}} \mathbb{Z}[A] / \sim
\]
where $\sim$ is generated by  $A+B-C$ for short exact sequences
\[
{\emptyset} \rightarrow A \rightarrow C \rightarrow B \rightarrow {\emptyset}
\]
We denote by $k(A)$ the class of an object in $K_0(\C_{\F})$. 
\end{enumerate}

\section{Ringel-Hall algebras} \label{Hall_section}

 For an introduction to Ringel-Hall algebras in the context of abelian categories, see \cite{S}. 
We define the Ringel-Hall algebra of $\C_{\F}$, denoted $\H_{\C_{\F}}$, to be the 
$\mathbb{Q}$--vector space of finitely supported functions on isomorphism classes of $\C_{\F}$. I.e.
\[
\H_{\C_{\F}} := \{ f: \on{Iso}(\C_{\F}) \rightarrow \mathbb{Q} | |supp(f)| < \infty  \}
\]
As a $\mathbb{Q}$--vector space it is spanned by the delta functions $\delta_A, A \in \on{Iso}(\C_{\F})$. The algebra structure on $\H_{\C_{\F}}$ is given by the convolution product:
\begin{equation} \label{prod}
f \star g (M) = \sum_{A \subset M} f(A) g(M/A) 
\end{equation}
for $M \in \on{Iso}(\C_{\F})$. In what follows, it will be conceptually useful to choose a representative in each isomorphism class. For $M, N, Q \in \on{Iso}(\C_{\F})$, let $F^Q_{M,N}$ be the number of exact sequences
\[
\emptyset \rightarrow M \overset{i}{\rightarrow}  Q \overset{\pi}{\rightarrow} N \rightarrow \emptyset
\]
where $(i,\pi)$ and $(i', \pi')$ are considered equivalent iff $i=i'$ and $\pi=\pi'$ (this makes sense, since we have fixed a representative in each isomorphism class). It follows from the definition \ref{prod} that
\[
\delta_M \star \delta_N = \sum_{Q \in \on{Iso}(\C_{\F})} \frac{F^{Q}_{M,N}}{|\on{Aut}(M)| |\on{Aut}(N) | }  \delta_Q,
\]
from which it is apparent that $\H_{\C_{\F}}$ encodes the structure of extensions in $\C_{\F}$. 

$\H_{\C_{\F}}$ possesses a co-commutative co-product given by
\begin{equation} \label{cop}
\Delta(f)(M,N)=f(M \oplus N) 
\end{equation}
as well as a natural $K^+_0 (\C_{\F})$--grading in which  $\delta_A$ has degree $k(A) \in K^+_0 (\C_{\F})$. If $\F$ is colored by the set $S$, it is easy to see that $K^+_0 (\C_{\F}) \simeq \mathbb{N}^{|S|}$. 

The subobjects of  $P \in \C_{\F}$ are exactly $I \in J_P$, and the product \ref{prod} becomes
\[
f \star g ([P]) = \sum_{I \in J_P} f([I]) g([P \backslash I]).
\]
 It is shown in \cite{S} that the product is associative, the co-product co-associative and co-commutative, and that the two are compatible, making $\H_{\C_{\F}}$ into a co-commutative bialgebra.  Recall that a bialgebra $A$ over a field $k$ is \emph{connected} if it possesses a $\mathbb{Z}_{\geq 0}$--grading such that $A_0 = k $. In addition to the $K^+_0(\C_{\F})$--grading, $\H_{\C_{\F}}$ possesses a grading by the order of the poset - i.e. we may assign $\deg(\delta_{P}) = |P|$. This gives it the structure of graded connected bialgebra, and hence Hopf algebra.  The Milnor-Moore theorem implies that $\H_{\C_{\F}}$ is the enveloping algebra of the Lie algebra of its primitive elements, which we denote by $\n_{\F}$ - i.e. $\H_{\C_{F}} \simeq U(\n_{\F})$. It follows from \ref{cop} that $f \in \n_{\F}$ is primitive if it is supported on the isomorphism classes of connected posets. Thus, we have that
 \[
 \n_{\F} = \on{span} \{ \delta_P \vert P \in \F, P \textrm{ connected } \}
 \]
 We will use the notation $\F^{conn} \subset \F$ to denote the sub-collection of $\F$ consisting of connected posets. 
 We have thus established the following:
 \begin{theorem}
 The Ringel-Hall algebra of the category $\C_{\F}$ is a co-commutative graded connected Hopf algebra, isomorphic to $U(\n_{\F})$, where $\n_{\F}$ denotes the graded Lie algebra of its primitive elements.  $\n_{\F} = \on{span} \{ \delta_{P} \vert P \in \F^{conn} \}$.
 \end{theorem}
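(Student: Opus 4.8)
The plan is to assemble the statement from the structural facts about $\H_{\C_{\F}}$ recalled in this section, together with two standard results of Hopf algebra theory. First I would record that $\star$ is associative, that $\Delta$ is coassociative and cocommutative, and that the two are compatible; all of this is proved in \cite{S} and rests on the properties of $\C_{\F}$ catalogued in Section \ref{icat_section}, in particular the identification of the subobjects of $P$ with $J_P$, the compatibility of quotients with $J$ (property \ref{quotients}), and the equality $J_{P_1 + P_2} = J_{P_1} \times J_{P_2}$. Thus $\H_{\C_{\F}}$ is a cocommutative bialgebra over $\mathbb{Q}$, and what remains is to exhibit a connected grading, deduce the existence of an antipode, and identify the resulting Hopf algebra with $U(\n_{\F})$.

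For the grading I would use $\deg(\delta_P) := |P|$, the number of elements of the underlying poset. Since a short exact sequence $\emptyset \to M \to Q \to N \to \emptyset$ forces $|Q| = |M| + |N|$, the product $\star$ is homogeneous; and since $\Delta(\delta_P)$ is supported on pairs $(A,B)$ with $A \oplus B \simeq P$, hence $|A| + |B| = |P|$, the coproduct is homogeneous as well. So this is a bialgebra grading by $\mathbb{Z}_{\geq 0}$ whose degree-zero component is $\mathbb{Q}\,\delta_{\emptyset} = \mathbb{Q}$, and $\H_{\C_{\F}}$ is connected. A connected graded bialgebra carries a unique antipode, built by induction on the grading, so $\H_{\C_{\F}}$ is a graded connected cocommutative Hopf algebra over $\mathbb{Q}$. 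The Milnor--Moore theorem, in the form valid over any field of characteristic zero (Cartier--Milnor--Moore--Quillen), then gives a canonical, grading-respecting isomorphism $\H_{\C_{\F}} \simeq U(\n_{\F})$, where $\n_{\F}$ is the graded Lie algebra of primitive elements with bracket the commutator of $\star$.

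It then remains to compute $\n_{\F}$ explicitly. Identifying $\H_{\C_{\F}} \otimes \H_{\C_{\F}}$ with the finitely supported functions on ordered pairs $(M,N) \in \on{Iso}(\C_{\F}) \times \on{Iso}(\C_{\F})$ via $(\delta_A \otimes \delta_B)(M,N) = \delta_A(M)\,\delta_B(N)$, and recalling that the unit of $\H_{\C_{\F}}$ is $\delta_{\emptyset}$, the coproduct formula \ref{cop} shows that $f$ is primitive, i.e. $\Delta(f) = f \otimes \delta_{\emptyset} + \delta_{\emptyset} \otimes f$, if and only if $f(\emptyset) = 0$ and $f(M \oplus N) = 0$ whenever $M$ and $N$ are both nonempty. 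Since by definition (property $(5)$ of Section \ref{icat_section}) a poset is disconnected precisely when it is a disjoint union of two nonempty posets, this is equivalent to $f$ being supported on nonempty connected posets, i.e. to $f \in \on{span}\{\delta_P \mid P \in \F^{conn}\}$. Hence $\n_{\F} = \on{span}\{\delta_P \mid P \in \F^{conn}\}$, and being the space of primitives it is automatically closed under the commutator; this completes the argument.

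I do not expect a conceptual obstacle here: the work lies entirely in checking that the two cited inputs apply with the hypotheses they require — that $\H_{\C_{\F}}$ genuinely satisfies the bialgebra axioms (\cite{S}), that we are working over $\mathbb{Q}$ so that Milnor--Moore is available in its characteristic-zero form, and that the size grading really is $\mathbb{Z}_{\geq 0}$-valued with one-dimensional bottom component. The single computation, the identification of the primitives, is just the unwinding of \ref{cop} carried out above.
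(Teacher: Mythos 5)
Your proposal is correct and follows essentially the same route as the paper: cite \cite{S} for the bialgebra structure, use the grading $\deg(\delta_P)=|P|$ to get connectedness and hence an antipode, invoke Milnor--Moore over $\mathbb{Q}$, and unwind the coproduct formula \ref{cop} to identify the primitives with $\on{span}\{\delta_P \mid P \in \F^{conn}\}$. You simply spell out in more detail the homogeneity checks and the ``if and only if'' in the characterization of primitives, which the paper states more briefly.
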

 
 \begin{remark}
 $\H_{\C_{\F}}$ is a special case of an \emph{incidence Hopf algebra} introduced by Schmitt in \cite{Sch,Sch2}. 
 \end{remark}

\section{A Pre-Lie structure on $\n_{\F}$} \label{main_section}

We assume now that the collection $\F$ consists of colored posets whose underlying Hasse diagrams are rooted trees. 
Recall that $\F$ was assumed to be:
\begin{itemize}
\item closed under the operation of taking convex sub-posets
\item closed under disjoint unions
\end{itemize}
It is immediate that to produce an $\F$ satisfying these two requirements, one may start with an arbitrary collection $\F'$ of colored posets, and close it with respect to each operation - i.e. adjoin to $\F'$ all convex sub-posets and all disjoint unions of these. If $\F$ arises in this way as the closure of $\F'$, we will write $\F=\overline{\F'}$. 

\begin{example} \label{ex1ton}
Suppose that $\F'$ consists of a single poset, whose Hasse diagram is an $n$--vertex ladder colored by the set $S=\{1, \cdots, n \}$. 
\psset{levelsep=4ex, treesep=0.6cm, treenodesize=1pt,nodesepB=-2pt}
\[
\pstree{\Tc*{3pt}~[tnpos=r]{1}}{ \pstree{\Tc*{3pt}~[tnpos=r]{2}}{ \pstree{\Tc*{3pt}~[tnpos=r]{3}}{ \psset{linestyle=dotted}   \pstree{\Tc*{3pt}~[tnpos=r]{n-1}}{ \psset{linestyle=solid}  \pstree{\Tc*{3pt}~[tnpos=r]{n}}{}    }      }} }
 \]
Let us adopt the notation $L(a_1, a_2, ..., a_k)$ for a $k$--vertex ladder Hasse diagram labeled by $a_1, a_2, ..., a_k$ root-to-leaf ($\F'$ thus consisting of $L(1,2,\cdots, n)$). To close $\F'$ with respect to convex subsets, we must adjoin to it $L(r,r+1,r+2, \cdots, r+m)$, where $1 \leq r \leq r+m \leq n$. 
\[
\pstree{\Tc*{3pt}~[tnpos=r]{1}}{} \dots \pstree{\Tc*{3pt}~[tnpos=r]{n}}{}, \hspace{0.3cm} \pstree{\Tc*{3pt}~[tnpos=r]{1}}{ \Tc*{3pt}~[tnpos=r]{2} } \dots \pstree{\Tc*{3pt}~[tnpos=r]{n-1}}{ \Tc*{3pt}~[tnpos=r]{n} } , \hspace{0.3cm}  \pstree{\Tc*{3pt}~[tnpos=r]{1}}{\pstree{\Tc*{3pt}~[tnpos=r]{2}}{ \Tc*{3pt}~[tnpos=r]{3}}} \dots  \pstree{\Tc*{3pt}~[tnpos=r]{n-2}}{\pstree{\Tc*{3pt}~[tnpos=r]{n-1}}{ \Tc*{3pt}~[tnpos=r]{n}}} \cdots \pstree{\Tc*{3pt}~[tnpos=r]{1}}{ \pstree{\Tc*{3pt}~[tnpos=r]{2}}{ \pstree{\Tc*{3pt}~[tnpos=r]{3}}{ \psset{linestyle=dotted}   \pstree{\Tc*{3pt}~[tnpos=r]{n-1}}{ \psset{linestyle=solid}  \pstree{\Tc*{3pt}~[tnpos=r]{n}}{}    }      }} }
\]
Finally, closing with respect to disjoint unions, we can identify elements of $\F = \overline{\F'}$ with Young diagrams having at most $n$ rows, each of whose columns is labeled by $k, k+1, \cdots, k+m$. For instance
\[
\Yvcentermath1
\young(2134,32,43,5)
\]
is identified with the poset $$L(2,3,4,5) + L(1,2,3) + L(3) + L(4). $$ 
\end{example}

\bigskip

We proceed to equip $\n_{\F}$ with a pre-Lie structure. For $a,b \in \F^{conn}$, we define
\begin{equation} \label{preLieProd}
\delta_a \rhd \delta_b = \delta_a \star \delta_b - \delta_{a \oplus b}
\end{equation}
and extend the product $\rhd$ to all of $\n_{\F}$ by linearity. The subtraction of the term $\delta_{a \oplus b}$ in \ref{preLieProd} has the effect of removing the delta-function supported on the one split extension of $b$ by $a$, and so the right-hand side of \ref{preLieProd} does indeed lie in $\n_{\F}$. It follows easily that we may re-write the definition \ref{preLieProd} as:

\begin{equation} \label{preLieProd2}
\delta_a \rhd \delta_b = \sum_{t \in \F} n(a,b,t) \delta_{t}
\end{equation}
where $n(a,b,t)$ is defined as in example \ref{RT}.

\begin{theorem}
Let $\F$ be a collection of colored posets closed with respect to taking convex sub-posets and disjoint unions. If the Hasse diagrams of posets in $\F$ are rooted trees, then $\rhd$ equips $\n_{\F}$ with the structure of a pre-Lie algebra. 
\end{theorem}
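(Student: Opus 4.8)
The plan is to exploit the concrete formula \ref{preLieProd2}, which presents $\rhd$ on the basis $\{\delta_a : a \in \F^{conn}\}$ of $\n_{\F}$ as a ``truncation to $\F$'' of the grafting product on the free pre-Lie algebra $\T_S$ of \emph{all} colored rooted trees (Example \ref{RT}), and then to transport the identity \ref{preLieIdentity} from $\T_S$ down to $\n_{\F}$. Concretely, I would identify each connected poset $a \in \F^{conn}$ with its Hasse diagram, so that $\F^{conn} \subset \mathbb{T}_S$ and the integers $n(a,b,t)$ appearing in \ref{preLieProd2} coincide with those of Example \ref{RT}; then I introduce the linear projection $p \colon \T_S \to \n_{\F}$ which fixes $\delta_t$ for $t \in \F^{conn}$ and annihilates $\delta_t$ for $t \notin \F$. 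In this language, \ref{preLieProd2} says exactly that $\delta_a \rhd \delta_b = p(a \rhd b)$ for $a,b \in \F^{conn}$, the right-hand side being computed in $\T_S$.

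Since $\rhd$ is bilinear, it suffices to verify \ref{preLieIdentity} on basis elements $a,b,c \in \F^{conn}$, and for this the crucial claims are the two identities
\[
(\delta_a \rhd \delta_b)\rhd \delta_c = p((a \rhd b)\rhd c), \qquad \delta_a \rhd (\delta_b \rhd \delta_c) = p(a \rhd (b \rhd c)).
\]
Granting them, the left pre-Lie associator $(\delta_a \rhd \delta_b)\rhd \delta_c - \delta_a\rhd(\delta_b\rhd\delta_c)$ computed in $\n_{\F}$ is the image under $p$ of the corresponding associator in $\T_S$; the latter is symmetric under $a \leftrightarrow b$ because $\T_S$ is a pre-Lie algebra (Example \ref{RT}, \cite{CL}), and $p$ preserves that symmetry, which is precisely \ref{preLieIdentity}.

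The main work, and the only genuine obstacle, is establishing these two identities — equivalently, checking that passing from $\T_S$ to its truncation $\n_{\F}$ does not destroy the relevant cancellations, even though $\F$ need not be closed under grafting (so the intermediate trees $a\rhd b$ and $b\rhd c$ may escape $\F$). I would argue at the level of coefficients. For $t \in \F$, the $\delta_t$-coefficient of $p((a\rhd b)\rhd c)$ is $\sum_{s\in\mathbb{T}_S} n(a,b,s)\,n(s,c,t)$, whereas that of $(\delta_a\rhd\delta_b)\rhd\delta_c$ is $\sum_{s\in\F^{conn}} n(a,b,s)\,n(s,c,t)$; these agree because any $s$ with $n(s,c,t)\neq 0$ is isomorphic to $P_e(t)$ for some edge $e$ of $t$, and $P_e(t)$, the leaf-side component obtained by deleting $e$, is a principal order ideal of $t$, hence a convex sub-poset of $t$, so $s \in \F$ by the convex-sub-poset closure of $\F$ (and being a tree, $s \in \F^{conn}$). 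The second identity is handled dually: $n(a,s',t)\neq 0$ forces $s' \cong R_e(t)$, the root-side component, which is an order filter of $t$, again a convex sub-poset, so $s' \in \F^{conn}$ whenever $t \in \F$; hence the $\F^{conn}$-restricted sum computing the $\delta_t$-coefficient of $\delta_a\rhd(\delta_b\rhd\delta_c)$ equals the unrestricted one computing that of $p(a\rhd(b\rhd c))$.

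I expect the step demanding the most care to be exactly this coefficient comparison — in particular, verifying that \emph{every} intermediate tree that actually contributes to a fixed $t \in \F$ arises as an order ideal or as an order filter of that very $t$, which is what allows the convex-sub-poset closure of $\F$ to be invoked; everything else is bookkeeping together with the cited pre-Lie property of $\T_S$. If a self-contained treatment were wanted, the identity for $\T_S$ itself can be proved directly by the standard bijection matching, in both $(a\rhd b)\rhd c$ and $a\rhd(b\rhd c)$, precisely the terms in which $a$ and $b$ are grafted onto distinct vertices of $c$. Finally, non-negativity and integrality of the structure constants, although not part of the present statement, are immediate from \ref{preLieProd2}, each $n(a,b,t)$ being a cardinality.
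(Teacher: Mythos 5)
Your proposal is correct and is essentially the paper's first proof in different clothing: the paper shows that the span of the trees outside $\F^{conn}$ is a two-sided pre-Lie ideal $J$ of the free pre-Lie algebra $\T_S$ and identifies $\n_{\F}$ with the quotient $\T_S/J$, and your projection $p$ is exactly that quotient map, with your coefficient comparison (for $t \in \F^{conn}$, every contributing intermediate tree is some $P_e(t)$ or $R_e(t)$, a convex sub-poset of $t$, hence again in $\F^{conn}$) being precisely the combinatorial fact the paper uses to verify the ideal property. (The paper also records a second, direct proof counting admissible two-edge cuts, which your argument does not need.)
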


\begin{proof}
A two-sided pre-Lie ideal in a pre-Lie algebra $A$ is a subspace $I \subset A$ such that  if $x \in I$, then $a \rhd x \in I$ and $x \rhd a \in I \; \forall a \in A$. One checks easily that the quotient $A / I$ inherits a pre-Lie structure. Let $\F$ be a collection of colored rooted forests colored by $S$, closed under the operations of disjoint union and convex sub-poset, and $\F^{conn} \subset \F$ the connected ones (i.e. the rooted trees). $\F^{conn}$ is closed under taking convex sub-posets. I claim that $J= \T_{S} \backslash \F^{conn}$ is a two-sided pre-Lie ideal in $\T_{S}$. Let $u \in \T_{S}$ and $s \in J$. We have
\[
\delta_{u} \rhd \delta_s = \sum_{t \in \T_S} n(u,s,t) \delta_{t}
\]
Suppose that $n(u,s,t) \neq 0$ and $t \in \T_S \backslash J = \F^{conn}$. $t$ has an edge $e$ such that $P_e(t) = u$ and $R_{e} (t) = s$, and since both are convex sub-posets of the poset $t \in \F^{conn}$, $u,s \in \F^{conn}$, contradicting the fact that $s \in J$. It follows that $\delta_u \rhd \delta_s \in J$. The same argument shows that $\delta_s \rhd \delta_u \in J$. The quotient $\T_S / J$ is canonically identified with $\n_{\F}$ with the bracket \ref{preLieProd2}. 
\end{proof}

We give a second proof, very close to the one for $\F=\mathbb{T}_S$ given in \cite{CL}.

\begin{proof}
We need to verify the identity \ref{preLieIdentity}. It follows from \ref{preLieProd2} that for $a,b,c \in \F^{conn}$,

\begin{align*}
(\delta_a \rhd \delta_b ) \rhd \delta_c & = (\sum_{t \in \F^{conn}} n(a,b,t) t ) \rhd c \\
                                                           & = \sum_{s,t \in \F^{conn}} n(a,b,t) n(t,c,s) s \\
                                                           & \textrm{ and } \\
\delta_a \rhd ( \delta_b  \rhd \delta_c ) & = \sum_{s,t \in \F^{conn}} a \rhd (\sum_{t \in \F^{conn}} n(b,c,t) t ) \\
                                                             & = \sum_{s,t \in \F^{conn}} n(b,c,t) n(a,t,s) s
\end{align*}
Because $\F$ is closed under taking convex sub-posets, $P_e(t) \in \F^{conn}$ and $R_{e}(t) \in \F^{conn}$,  $\forall t \in \F^{conn}$. The sum
$\sum_{t \in \F^{conn}} n(a,b,t) n(t,c,s)$ may be identified with the number of pairs of edges $\pi = \{ e_1, e_2 \} \subset E(s)$, such that the resulting cut is NOT admissible (i.e. both edges lie along a single path from root to leaf in $s$), and the three connected components when $\pi$ is removed, are, top-to-bottom, $c, b$ and $a$.  Similarly, the sum $ \sum_{t \in \F^{conn}} n(b,c,t) n(a,t,s) $ may identified with the number of pairs $\pi' = \{e_1, e_2 \} \subset E(s)$ such that the corresponding cut of $s$ results in three components $a,b,c$, with $r(s) \in c$, and no element of $a$ greater than an element of $b$.  The coefficient of $\delta_s$ in 
\[
\delta_a \rhd ( \delta_b  \rhd \delta_c ) - (\delta_a \rhd \delta_b ) \rhd \delta_c 
\] 
therefore counts the number of \emph{admissible} two-edge cuts of $s$ such that the connected component containing $r(s)$ is isomorphic to $c$, and the remaining two to $a, b$ respectively. 

Applying the same analysis to the right-hand-side of \ref{preLieIdentity} proves the equality. 

\end{proof}

\begin{remark}
It follows from \ref{preLieProd2} that $\n_{\F}$ is defined over $\mathbb{Z}$, and that the structure constants are non-negative. 
\end{remark}

\section{Examples} \label{example_section}

In this section, we consider different examples of families $\F$, and the resulting pre-Lie algebras $\n_{\F}$. Recall that since $\n_{\F}$ is graded by $\mathbb{N}$, the Lie algebra $\n_{\F}$ is pro-nilpotent (nilpotent if $\n_{\F}$ is finite-dimensional). 

\begin{example}
Let $S$ be a finite set, and $\F=\overline{\T_S}$, the set of rooted forests colored by $S$. We then obtain the pre-Lie algebra structure on $S$--labeled rooted trees described in example \ref{RT}. 
\end{example}

\begin{example}
Suppose $S$ consists of a single element, and let $\F=\overline{\F'}$, where $\F'$ is the collection of all ladders:
\psset{levelsep=4ex, treesep=0.6cm, treenodesize=1pt,nodesepB=-2pt}
\[
\pstree{\Tc*{3pt}~[tnpos=r]{}}{ \pstree{\Tc*{3pt}~[tnpos=r]{}}{ \pstree{\Tc*{3pt}~[tnpos=r]{}}{ \psset{linestyle=dotted}   \pstree{\Tc*{3pt}~[tnpos=r]{}}{ \psset{linestyle=solid}  \pstree{\Tc*{3pt}~[tnpos=r]{}}{}    }      }} }
\]
(since there is only one color, we suppress the labeling). Denote by $L_n$ the n-vertex ladder. We have
\[
\delta_{L_n} \rhd \delta_{L_m} = \delta_{L_{m+n}}. 
\]
so the Lie algebra $\n_{\F}$ is abelian. In the Ringel-Hall algebra $\H_{\C_{\F}}$ we have
\[
\delta_{L_n} \star \delta_{L_m} = \delta_{L_{m+n}} + \delta_{L_m \oplus L_n}
\]
and
\[
\Delta(L_m) = L_m \otimes 1 + 1 \otimes L_m
\]
It is well-known (see eg. \cite{M}) that the Hopf algebra $\H_{\C_{\F}}$ is isomorphic to the Hopf algebra of symmetric functions, with $L_m$ corresponding to the mth power sum. 
\end{example}

\begin{example}
Let $S = \{ 1, 2, \cdots, n \}$, and let $\F=\overline{\F'}$, where $\F'$ consists of singleton vertices colored by $S$. $\F$ is thus the collection of all finite sets colored by $S$, with trivial partial order. Denote by $X(m_1, m_2, \cdots, m_n)$ the set of $m_1+m_2+\cdots +m_n$ elements, with $m_i$ colored $i$, $1 \leq i \leq n$. $\n_{\F}$ is therefore spanned by the $\delta_{X(0, \cdots, \underset{i}{1}, \cdots, 0)}$. The operation $\rhd$ is identically $0$,
so the Lie algebra $\n_{\F}$ is abelian. In $\H_{\C_{\F}}$ we have
\[
\delta_{X(m_1, \cdots, m_n)} \star \delta_{X(m'_1, \cdots, m'_n)} =  \left( \prod^{n}_{i} {{m_i+m'_i} \choose m_i} \right)  \delta_{ (m_1+m'_1, \cdots, m_n + m'_n)}
\]
\end{example}

\begin{example}
Let $S = \{ 1, 2, \cdots, n \}$, and let $\F=\overline{\F'}$, where $\F'$ consists of all $S$--colored ladder trees
\psset{levelsep=4ex, treesep=0.6cm, treenodesize=1pt,nodesepB=-2pt}
\[
\pstree{\Tc*{3pt}~[tnpos=r]{2}}{ \pstree{\Tc*{3pt}~[tnpos=r]{1}}{ \pstree{\Tc*{3pt}~[tnpos=r]{1}}{ \psset{linestyle=dotted}   \pstree{\Tc*{3pt}~[tnpos=r]{3}}{ \psset{linestyle=solid}  \pstree{\Tc*{3pt}~[tnpos=r]{2}}{}    }      }} }
\]
Denote by $L(a_1, \cdots, a_k)$ the $k$-vertex ladder whose $i$th vertex counting from the \emph{leaf} is colored $a_i$. We have
\begin{equation} \label{pprod}
\delta_{L(a_1, \cdots, a_n)} \rhd \delta_{L(b_1, \cdots, b_m)} = \delta_{L(a_1, \cdots, a_n, b_1, \cdots, b_m)} 
\end{equation}
Let $\mathbb{Q}<X_1, \cdots, X_s>$ denote the free associative algebra on $S$ viewed as a Lie algebra. There is a linear isomorphism 
\begin{align*}
\rho: \n_{\F} & \rightarrow \mathbb{Q}<X_1, \cdots, X_s>\\
\rho(L(a_1, \cdots, a_k)) &= X_{a_1} X_{a_2} \cdots X_{a_k}
\end{align*}
It follows from \ref{pprod} that $\rho$ is a Lie algebra isomorphism. 
\end{example}

\begin{example}
Consider the collection $\F$ from example \ref{ex1ton}, where $\F = \overline{L(1,2, \cdots, n)}'$. Here $\n_{\F} = \on{span} \{ \delta_{L(k, \cdots, k+m)} \}$, $1 \leq k \leq k+m \leq n$.  We have

\[
 \delta_{L(p, \cdots,  p+r)} \rhd \delta_{L(k, \cdots, k+m)} =  \left\{ \begin{array}{ll} 
  \delta_{L(k, \cdots, p+r)} & \mbox{ if $k+m+1=p$ } \\
  0 & \mbox{ otherwise }
\end{array} \right.
\]
so that in the Lie algebra $\n_{\F}$, 
\begin{equation} \label{commNn}
[\delta_{L(p, \cdots,  p+r)}, \delta_{L(k, \cdots, k+m)}] =   \left\{ \begin{array}{ll} 
  \delta_{L(k, \cdots, p+r)} & \mbox{ if $k+m+1=p$ } \\
  0 & \mbox{ otherwise }
\end{array} \right.
\end{equation}
Let $E_{i,j}$ denote the $(n+1) \times (n+1)$ matrix with a $1$ in entry $(i,j)$ and zeros everywhere else. Then the commutation relations \ref{commNn} imply that the map
\begin{align*}
\phi: \n_{\F}  & \rightarrow \on{Mat}_{n+1} \\
\phi(  \delta_{L(k, \cdots, k+m)} ) & = - E_{k, k+m+1}
\end{align*}
is an isomorphism of $\n_{\F}$ onto the Lie algebra of upper-triangular $(n+1) \times (n+1)$ matrices. 
\end{example}

\begin{example}
Let $S = \{ 1, 2\}$, and let $\F=\overline{\F'}$, where $\F'$ consists of all $S$--colored ladders where the colors alternate. 
\psset{levelsep=4ex, treesep=0.6cm, treenodesize=1pt,nodesepB=-2pt}
\[
\pstree{\Tc*{3pt}~[tnpos=r]{1}}{ \pstree{\Tc*{3pt}~[tnpos=r]{2}}{ \pstree{\Tc*{3pt}~[tnpos=r]{1}}{ \psset{linestyle=dotted} \Tc*{1pt}~[tnpos=r]{}}}} ,\hspace{1cm} \pstree{\Tc*{3pt}~[tnpos=r]{2}}{ \pstree{\Tc*{3pt}~[tnpos=r]{1}}{ \pstree{\Tc*{3pt}~[tnpos=r]{2}}{ \psset{linestyle=dotted} \Tc*{1pt}~[tnpos=r]{}}}}
\]
Let us denote by $L(i,n), \; i \in S, n \geq 1$ the alternating ladder with $n$ vertices, whose root is colored $i$. Then $\n_{\F} = \on{span}\{ L(i,n) \}, \; i \in S, n \geq 1$. We have
\begin{align*}
\delta_{L(i,n)} \rhd \delta_{L(i,m)} & =  \left\{ \begin{array}{ll} 
  \delta_{L(i,n+m)} & \mbox{ if $m \equiv 0 \; mod \;  2, \; i \in S $} \\
  0 & \mbox{ otherwise }
\end{array} \right. \\
\delta_{L(i,n)} \rhd \delta_{L(j,m)} & =  \left\{ \begin{array}{ll} 
  \delta_{L(j,n+m)} & \mbox{ if $m \equiv 1 \; mod \;  2, \; i \neq j \in S$ } \\
  0 & \mbox{ otherwise }
\end{array} \right.
\end{align*}

It follows that 
\begin{align} \label{cr_gl2hat}
[\delta_{L(i,2k)}, \delta_{L(j,2l)}] & = 0 \\
\nonumber [\delta_{L(i,2k)}, \delta_{L(j,2l+1)}] & =  \left\{ \begin{array}{ll}  - \delta_{L(j, 2(k+l) +1 )} & \mbox{if $ i = j$} \\
 \delta_{L(j, 2(k+l) +1 )} & \mbox{if $ i \neq j$}  \end{array} \right. \\
\nonumber [\delta_{L(i,2k+1)}, \delta_{L(j,2l+1)}] &= \delta_{L(j,2(k+l+1))} -  \delta_{L(i,2(k+l+1))} 
\end{align} 

Recall that $\mathfrak{gl}_2 = \on{Mat}_2 = \n_{-} \oplus \h \oplus \n_+$, where $$\n_- = \on{span}\{f \}, \n_+ = \on{span}\{ e \}, \h = \on{span}\{ h_1, h_2 \}$$ and
$$ f = \left( \begin{matrix} 0 & 0 \\ 1 & 0 \end{matrix} \right) \hspace{0.5cm} h_1 =  \left( \begin{matrix} 1 & 0 \\ 0 & 0 \end{matrix} \right) \hspace{0.5cm} h_2 =  \left( \begin{matrix} 0 & 0 \\ 0 & 1 \end{matrix} \right)   \hspace{0.5cm} e = \left( \begin{matrix} 0 & 1 \\ 0 & 0 \end{matrix} \right) $$
Let $L\mathfrak{gl}_2 = \mathfrak{gl}_2 \otimes \mathbb{Q}[t,t^{-1}]$ be the loop algebra of $\mathfrak{gl}_2 $, with bracket
\[
[X\otimes t^m, Y \otimes t^n] = [X,Y] \otimes t^{n+m}
\]
$L\mathfrak{gl}_2 $ also has a triangular decomposition $L\mathfrak{gl}_2 = L\mathfrak{gl}^+_2 \oplus \h \oplus L\mathfrak{gl}^-_2$, where
\[
L\mathfrak{gl}^+_2 = \n_+ \oplus \mathfrak{gl}_2\otimes t \mathbb{Q}[t] \hspace{1cm} L\mathfrak{gl}^-_2 = \n_- \oplus \mathfrak{gl}_2 \otimes t^{-1} \mathbb{Q}[t^{-1}] 
\]
Let 
\begin{align*}
\phi: \n_{\F} & \rightarrow L\mathfrak{gl}^+_2 \\
\phi( \delta_{L(1,2k+1)}) & = e \otimes t^k \\
\phi( \delta_{L(2, 2k+1)}) &= f \otimes t^{k+1} \\
\phi( \delta_{L(1,2k) }) &= - h_1 \otimes t^k \\
\phi( \delta_{L(2,2k) }) &= - h_2 \otimes t^{k}
\end{align*}
\end{example}
It follows from \ref{cr_gl2hat} that $\phi$ is an isomorphism. It follows that $U(L\mathfrak{gl}^+_2 )$ has an integral basis which may be identified with Young diagrams whose columns are colored by alternating strings of $1$'s and $2$'s. 

\begin{example}
A straightforward generalization of the previous example, with $S=\{ 1, \cdots, n\}$ and $\F'$ consisting of ladders periodically colored by $1, \cdots, n$ yields $\n_\F \simeq L \mathfrak{gl}^+_n$. 
\end{example}

\begin{example}
Let $S = \{1,2 \}$, and let $\F=\overline{\F'}$, where $\F'$ is the set of all ladders colored by a sequence of $1$'s followed by a sequence of $2$'s. 
\psset{levelsep=4ex, treesep=0.6cm, treenodesize=1pt,nodesepB=-2pt}
\[
\pstree{\Tc*{3pt}~[tnpos=r]{1}}{ \pstree{\Tc*{3pt}~[tnpos=r]{1}}{ \pstree{\Tc*{3pt}~[tnpos=r]{1}}{ \psset{linestyle=dotted}   \pstree{\Tc*{3pt}~[tnpos=r]{2}}{ \psset{linestyle=solid}  \pstree{\Tc*{3pt}~[tnpos=r]{2}}{}    }      }} }
\]
Denote by $L(i,j)$ the ladder with $i$ $1$'s followed by $j$ $2$'s. We have
\begin{align*}
\delta_{L(i,j)} \rhd \delta_{L(m,n)} & = 0 \mbox{ if $ij > 0$ and $mn > 0$} \\
\delta_{L(i,0)} \rhd \delta_{L(m,n)} & =  \left\{ \begin{array}{ll} 
  \delta_{L(i+m,0)} & \mbox{ if $n=0$} \\
  0 & \mbox{ otherwise }
\end{array} \right. \\
\delta_{L(0,j)} \rhd \delta_{L(m,n)} &= \delta_{L(m,n+j)} \\
\delta_{L(i,j)} \rhd \delta_{L(m,0)} &= \delta_{L(i+m,j)} \\
\delta_{L(i,j)} \rhd \delta_{L(0,n)} & =  \left\{ \begin{array}{ll} 
  \delta_{L(0,j+n)} & \mbox{ if $i=0$} \\
  0 & \mbox{ otherwise }
\end{array} \right. \\
\end{align*}
so that we obtain the following non-zero commutation relations (i.e. all other commutators are $0$):
\begin{align*}
[\delta_{L(i,0)}, \delta_{L(0,n)}] &= - \delta_{L(i,n)} \\
[\delta_{L(i,0)}, \delta_{L(m,n)}] &= - \delta_{L(m+i,n)} \mbox{  if $n > 0$} \\
[\delta_{L(0,j)}, \delta_{L(m,n)}] &= \delta_{L(m,n+j)} \mbox{  if $m > 0$}
\end{align*}
\end{example}

\begin{example}
Let $S=\{1, 2, \cdots, n \}$, and let $\F = \overline{\F'}$, where $\F'$ consists of all $S$--colored corollas (rooted trees where all leaves are connected directly to the root)
\psset{levelsep=4ex, treesep=0.6cm, treenodesize=1pt,nodesepB=-2pt}
\[
\pstree{\Tc*{3pt}~[tnpos=r]{2}}{ \pstree{\Tc*{3pt}~[tnpos=r]{1}}{} \pstree{\Tc*{3pt}~[tnpos=r]{1}}{} \pstree{\Tc*{3pt}~[tnpos=r]{3}}{} \pstree{\Tc*{3pt}~[tnpos=r]{2}}{}  }
\]
Closing $\F'$ with respect to convex sub-posets means adjoining singleton colored trees. Denote by $X(i)$ the singleton tree colored by $1 \leq i \leq n$, and by $Y(i, a_1, \cdots, a_n)$ the corolla whose root is colored $i$ and which has $a_1 + a_2 \cdots + a_n$ leaves, with $a_1$ colored $1$, $a_2$ colored $2$ etc. In $\n_{\F}$ we have
\begin{align*}
\delta_{X(i)} \rhd  \delta_{X(j)} & = \delta_{Y(j, 0, \cdots, \underset{i}{1}, \cdots, 0)} \\
\delta_{X(i)} \rhd \delta_{Y(j,a_1, \cdots, a_n)} &= \delta_{Y(j, a_1, \cdots, a_i + 1, \cdots, a_n)} \\
\delta_{Y(j,a_1, \cdots, a_n)} \rhd \delta_{X(i)} & = 0 \\
\delta_{Y(j,a_1, \cdots, a_n)} \rhd \delta_{Y(j,b_1, \cdots, b_n)} &= 0
\end{align*}
which leads to the following commutation relations:
\begin{align*}
[\delta_{X(i)}, \delta_{X(j)}] &=  \delta_{Y(j, 0, \cdots, \underset{i}{1}, \cdots, 0)}  - \delta_{Y(i, 0, \cdots, \underset{j}{1}, \cdots, 0)} \\
[\delta_{X(i)}, \delta_{Y(j,a_1, \cdots, a_n)} ] &= \delta_{Y(j, a_1, \cdots, a_i + 1, \cdots, a_n)} \\
[\delta_{Y(j,a_1, \cdots, a_n)}, \delta_{Y(j,b_1, \cdots, b_n)}] &= 0 
\end{align*}
\end{example}

\newpage

\end{document}